\newtheorem{Theorem}{Theorem}
\newtheorem{Corollary}{Corollary}
\newtheorem{Example}{Example}
\newtheorem{Lemma}{Lemma}
\newtheorem{Conjecture}{Conjecture}
\begin{document}

\title{Wilf Equivalence for the Charge Statistic }        
\author{Kendra Killpatrick}        
\date{\today}         
\maketitle

\begin{abstract}
Savage and Sagan have recently defined a notion of {\it {st}}-Wilf equivalence for any permutation statistic {\it {st}} and any two sets of permutations $\Pi$ and $\Pi'$.  In this paper we give a thorough investigation of {\it {st}}-Wilf equivalence for the charge statistic on permutations and use a bijection between the charge statistic and the major index to prove a conjecture of Dokos, Dwyer, Johnson, Sagan and Selsor regarding powers of 2 and the major index.  
\end{abstract}

\section{Background}      

Let $S_n$ be the symmetric group of all permutations of the set $[n] = \{ 1, 2, \dots, n\}$ and suppose $\pi = a_1 a_2 \cdots a_n$ and $\sigma = b_1 b_2 \cdots b_n$ are two permutations in $S_n$.  We say that $\pi$ is {\it {order isomorphic}} to $\sigma$ if $a_i < a_j$ if and only if $b_i < b_j$.  For any $\pi \in S_n$ and $\sigma \in S_k$ for $k \leq n$, we say that $\pi$ {\it {contains a copy of $\sigma$}} if $\pi$ has a subsequence that is order isomorphic to $\sigma$.  If $\pi$ contains no subsequence order isomorphic to $\sigma$ then we say that $\pi$ {\it {avoids}} $\sigma$.  

Now let $\Pi$ be a subset of permutations in $S_n$ and define $Av_n(\Pi)$ as the set of permutations in $S_n$ which avoid every permutation in $\Pi$.  Two sets of permutations $\Pi$ and $\Pi'$ are said to be {\it {Wilf equivalent}} if $\vert Av_n(\Pi)\vert = \vert Av_n(\Pi')\vert$.  If $\Pi$ and $\Pi'$ are Wilf equivalent, we write $\Pi \equiv \Pi'$.  

Savage and Sagan \cite{SSa} defined a $q$-analogue of Wilf equivalence by considering any permutation statistic $st$ from $S_n \rightarrow N$, where $N$ is the set of nonnegative integers, and letting 
\[
F_n^{st}(\Pi;q) = \sum_{\sigma \in Av_n(\Pi)} q^{st(\sigma)}.
\]
They defined $\Pi$ and $\Pi'$ to be {\it {st-Wilf equivalent}} if $F_n^{st}(\Pi;q) = F_n^{st}(\Pi';q)$ for all $n \geq 0$.  In this case, we write $\Pi \stackrel{st}{\equiv} \Pi'$. We will use $[\Pi]_{st}$ to denote the st-Wilf equivalence class of $\Pi$.  If we set $q=1$ in the generating function above we have $F_n^{st}(\Pi;1) = \vert Av_n(\Pi) \vert$, thus st-Wilf equivalence implies Wilf equivalence.  

In \cite{DDJ}, Dokos, Dwyer, Johnson, Sagan and Selsor give a thorough investigation of st-Wilf equivalence for both the major index and the inversion statistic.  Our goal in this paper is to give a similarly thorough investigation for another well known Mahonian statistic, the charge statistic.  In Section 2, we give the necessary definitions for the material covered and in Section 3 we discuss charge Wilf equivalence for subsets $\Pi \in S_3$.  In Section 4 we state and prove a conjecture of Dokos, Dwyer, Johnson, Sagan and Selsor by showing it is equivalent to a similar statement for the charge statistic.  Our proof of this result uses some basic facts about standard Young tableaux and the Robinson-Schensted correspondence.  We close the paper with some directions for further research.

\section{Definitions}

Throughout this paper we will utilize some basic operations on permutations, namely the {\it{inverse}}, the {\it {reverse}} and the {\it{complement}}.  For a permutation $\pi = \pi_1 \pi_2 \cdots \pi_n$, the inverse is the standard inverse operation on permutations, the reverse is
\[
\begin{array}{cccccc}
\pi^r &=& \pi_n& \cdots &\pi_2 &\pi_1
\end{array}
\]
and the complement is 
\[
\begin{array}{cccccc}
\pi^c &=& n+1-\pi_1&n+1-\pi_2&\cdots&n+1-\pi_n.
\end{array}
\]

For a permutation $\pi  = \pi_1 \pi_2 \cdots \pi_n \in S_n$, define the {\it {descent set of $\pi$}} to be $Des(\pi) = \{ i \vert \pi_i > \pi_{i+1} \}$.  The {\it {major index}} of a permutation, first defined by MacMahon \cite{Mac}, is then defined as 
\[
maj(\pi) = \sum_{i \in Des(\pi)} i.
\]

For example, for $\pi = \begin{array}{ccccccccc} 3&2&8&5&7&4&6&1&9 \end{array}$, $Des(\pi) = \{1, 3, 5, 7\}$ and $maj(\pi) = 1 + 3 + 5 + 7 = 16$.  

Let $\pi$ be a permutation in $S_n$.  For any $i$ in the permutation, define the {\it {charge value of $i$}}, $chv(i)$, recursively as follows:

\begin{align*}
chv(1) &= 0 \\
chv(i) &= 0 \textnormal{ if $i$ is to the right of $i-1$ in $\pi$} \\
chv(i) &= n+1-i \textnormal{ if $i$ is to the left of $i-1$ in $\pi$}
\end{align*}

Now for $\pi \in S_n$, define the {\it{charge of $\pi$}}, $ch(\pi)$, to be
\[
ch(\pi) = \sum_{i=1}^n chv(i).
\]

In the following example for $\pi = \begin{array}{ccccccccc} 3&2&8&5&7&4&6&1&9 \end{array}$, the charge values of each element are given below the permutation:
\[
\begin{array}{ccccccccccc}
\pi&=&3&2&8&5&7&4&6&1&9\\
 & &\textnormal{\scriptsize7}&\textnormal{\scriptsize8}&\textnormal{\scriptsize2}&\textnormal{\scriptsize5}&\textnormal{\scriptsize3}&\textnormal{\scriptsize0}&\textnormal{\scriptsize0}&\textnormal{\scriptsize0}&\textnormal{\scriptsize0}
\end{array}
\]
end $ch(\pi) = 7+8+2+5+3=25$.  The definition of the charge statistic was first given by Lascoux and Sch\"utzenberger \cite{LSc}.





Define 
\[
Ch_n(\Pi;q) = F_n^{ch}(\Pi; q) = \sum_{\sigma \in Av_n(\Pi)} q^{ch(\sigma)}.
\]

\section{Equivalence for permutations in $S_3$}

In this section, we will consider the polynomials $Ch_n(\Pi;q)$ where $\Pi \subseteq S_3$.  To begin, fix $n \geq 0$ and let $\pi \in S_n$.  Define $f(\pi) = ((\pi^r)^c)^{-1}$.  It is well known that each of the operations of reverse, complement and inverse are bijections on $S_n$ so $f$ is a bijection from $S_n$ to $S_n$.  

\begin{Lemma}
Fix $n \geq 0$ and let $\pi \in S_n$.  Then $maj(\pi) = ch(f(\pi))$.
\end{Lemma}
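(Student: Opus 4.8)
The plan is to unwind the definition of $f$ and translate the recursive definition of charge into a statement about the descents of $\pi$. First I would set $\rho = (\pi^r)^c$, so that $f(\pi) = \rho^{-1}$, and record the explicit one-line formula $\rho_k = n+1-\pi_{n+1-k}$ obtained by composing the reverse and complement operations. Since $f(\pi) = \rho^{-1}$, the position of the entry $j$ in the one-line notation of $f(\pi)$ is exactly $\rho_j$; in particular, ``$j$ lies to the left of $j-1$ in $f(\pi)$'' is equivalent to $\rho_j < \rho_{j-1}$.

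Next I would substitute the formula for $\rho$ into this inequality: $\rho_j < \rho_{j-1}$ becomes $n+1-\pi_{n+1-j} < n+1-\pi_{n+2-j}$, i.e. $\pi_{n+1-j} > \pi_{(n+1-j)+1}$, which says precisely that $n+1-j \in Des(\pi)$. Hence in the sum $ch(f(\pi)) = \sum_{j=1}^n chv(j)$, the term $chv(j)$ equals $n+1-j$ when $n+1-j \in Des(\pi)$ and equals $0$ otherwise; the base case $chv(1)=0$ is consistent with this, since it corresponds to the index $n$, which never lies in $Des(\pi)$. Reindexing by $i = n+1-j$, the nonzero terms are exactly those with $i \in Des(\pi)$, each contributing $i$, so $ch(f(\pi)) = \sum_{i \in Des(\pi)} i = maj(\pi)$, as desired.

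I expect the only real obstacle to be bookkeeping: one must keep straight the order in which reverse, complement, and inverse are applied and carefully distinguish ``the value at position $k$'' from ``the position of value $k$,'' since a single sign slip in the formula for $\rho$, or in passing to $\rho^{-1}$, would destroy the correspondence. Running the identity against the paper's example $\pi = 3\,2\,8\,5\,7\,4\,6\,1\,9$, where both sides equal $16$, is a cheap sanity check against exactly this kind of error.
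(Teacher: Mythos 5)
Your proposal is correct and follows essentially the same route as the paper: both arguments track the composition of reverse, complement, and inverse to show that $i \in Des(\pi)$ exactly when the value $n+1-i$ sits to the left of $n-i$ in $f(\pi)$, contributing charge value $i$. Your formulation via the explicit formula $\rho_k = n+1-\pi_{n+1-k}$ and the observation that the position of $j$ in $\rho^{-1}$ is $\rho_j$ is a slightly more compact packaging of the paper's two-case computation, but the underlying argument is the same.
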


\begin{proof}

We will show that if there is no descent in position $i$ in $\pi$ then $n+1-i$ has a charge value of 0 in $f(\pi)$ and if there is a descent in position $i$ in $\pi$ then the charge value of $n+1-i$ in $f(\pi)$ is $(n+1) - (n+1-i) = i$.  

If there is no descent in position $i$ in $\pi$, then $\pi_i < \pi_{i+1}$.  Let $\pi_i = j$ and $\pi_{i+1} = k$, so $j<k$.  When we apply the reverse operation to $\pi$, we obtain $\pi_{n-i} = k$ and $\pi_{n+1-i} = j$.  Then $(\pi_{n-i})^c = n+1-k$ and $(\pi_{n-i+1})^c = n+1-j$ and since $j<k$, $n+1-k<n+1-j$.  When we apply the inverse operation we have $\pi_{n+1-k}=n-i$ and $\pi_{n+1-j}=n+1-i$, thus $n+1-i$ is to the right of $n-i$ in $f(\pi)$ so $n+1-i$ has a charge value of 0.

If there is a descent in position $i$ in $\pi$ then $\pi_i > \pi_{i+1}$.  Let $\pi_i=j$ and $\pi_{i+1}=k$, so $j>k$.  When we apply the reverse operation, we obtain $\pi_{n-i}=k$ and $\pi_{n+1-i}=j$.  Then $(\pi_{n-i})^c = n+1-k$ and $(\pi_{n+1-i})^c = n+1-j$ and since $j>k$, $n+1-k>n+1-j$.  When we apply the inverse operation, we have $\pi_{n+1-k} = n-i$ and $\pi_{n+1-j} = n+1-i$, thus $n+1-i$ is to the left of $n-i$ in $f(\pi)$ so $n+1-i$ has a charge value of $(n+1)=(n+1-i) = i$.
\end{proof}

\begin{Lemma} Fix $n \geq 0$.  Then
\begin{align*}
f:& Av_n(123) \rightarrow Av_n(123)\\
f:& Av_n(132) \rightarrow Av_n(213)\\
f:& Av_n(213) \rightarrow Av_n(132)\\
f:& Av_n(231) \rightarrow Av_n(231)\\
f:& Av_n(312) \rightarrow Av_n(312)\\
f:& Av_n(321) \rightarrow Av_n(321).
\end{align*}
\end{Lemma}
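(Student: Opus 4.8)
The plan is to reduce the statement to three standard facts about how pattern containment interacts with the reverse, complement, and inverse operations, together with six explicit computations in $S_3$.

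First I would record the following well-known equivalences: for any $\pi \in S_n$ and $\sigma \in S_k$ with $k \leq n$,
\[
\pi \text{ contains } \sigma \iff \pi^r \text{ contains } \sigma^r \iff \pi^c \text{ contains } \sigma^c \iff \pi^{-1} \text{ contains } \sigma^{-1}.
\]
Each of these follows immediately from the definitions: reversing $\pi$ reverses every subsequence, so an embedded copy of $\sigma$ becomes an embedded copy of $\sigma^r$; complementing $\pi$ reverses all order relations and hence turns a copy of $\sigma$ into a copy of $\sigma^c$; and passing to $\pi^{-1}$ interchanges the roles of positions and values, turning the statement ``the values in positions $i_1 < \cdots < i_k$ are order isomorphic to $\sigma$'' into ``the values $i_1,\dots,i_k$ occur in $\pi^{-1}$ in the relative order prescribed by $\sigma^{-1}$.'' Composing the three equivalences and taking contrapositives gives, for $f(\pi) = \big((\pi^r)^c\big)^{-1}$,
\[
\pi \text{ avoids } \sigma \iff f(\pi) \text{ avoids } \big((\sigma^r)^c\big)^{-1} = f(\sigma).
\]
Since $f$ is a bijection on $S_n$ (being a composition of three bijections), it therefore restricts to a bijection $f : Av_n(\sigma) \rightarrow Av_n(f(\sigma))$ for every single $\sigma$.

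It then remains only to compute $f(\sigma)$ for each of the six patterns $\sigma \in S_3$, a short finite check done by writing out the one-line notation at each stage of $\sigma \mapsto \sigma^r \mapsto (\sigma^r)^c \mapsto \big((\sigma^r)^c\big)^{-1}$. This yields $f(123)=123$, $f(132)=213$, $f(213)=132$, $f(231)=231$, $f(312)=312$, and $f(321)=321$, which is exactly the list in the statement. To keep the bookkeeping light one can use the identity $(\pi^r)^c = (\pi^c)^r$, so that the first two operations may be applied in either order.

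I do not expect a genuine obstacle here; the only step that warrants care is the inverse operation — both in verifying the general equivalence $\pi \text{ contains } \sigma \iff \pi^{-1}\text{ contains }\sigma^{-1}$ with the correct direction, and in the six case computations, where it is easy to transpose two values by accident. I would therefore spell out each $f(\sigma)$ explicitly rather than quote it, but the content of the proof lies entirely in assembling the three classical symmetries and performing the routine case analysis.
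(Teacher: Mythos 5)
Your proposal is correct and follows essentially the same route as the paper: the paper likewise traces a pattern occurrence through the reverse, complement, and inverse steps in turn, explicitly verifying that $f$ sends $123$-containing permutations to $123$-containing permutations and leaving the remaining five cases to the reader. Your packaging via the general equivalence ``$\pi$ contains $\sigma$ if and only if $f(\pi)$ contains $f(\sigma)=((\sigma^r)^c)^{-1}$,'' followed by the six computations of $f(\sigma)$ (all of which check out and match the lemma), is a cleaner and more uniform way to cover every case at once.
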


\begin{proof}
We will instead prove that for a fixed $n \geq 0$,
\begin{align*}
f:& Av_n(123)^C \rightarrow Av_n(123)^C\\
f:& Av_n(132)^C \rightarrow Av_n(213)^C\\
f:& Av_n(213)^C \rightarrow Av_n(132)^C\\
f:& Av_n(231)^C \rightarrow Av_n(231)^C\\
f:& Av_n(312)^C \rightarrow Av_n(312)^C\\
f:& Av_n(321)^C \rightarrow Av_n(321)^C.
\end{align*}

Suppose $\pi \in S_n$ contains a 123-pattern.  Then there exists an $i < j < k$ such that $\pi_i = a$, $\pi_j = b$ and $\pi_k = c$ with $a<b<c$.  Then in $\pi^r$, $\pi_{n+1-k} = c$, $\pi_{n+1-j}=b$ and $\pi_{n+1-i} = a$.  In $(\pi^r)^c$ we have $\pi_{n+1-k}=n+1-c$, $\pi_{n+1-j} = n+1-b$ and $\pi_{n+1-i}=n+1-a$ where $n+1-c < n+1-b < n+1-a$.  Finally, in $((\pi^r)^c)^{-1}$ we have $\pi_{n+1-c}=n+1-k$, $\pi_{n+1-b} = n+1-j$ and $\pi_{n+1-a} = n+1-i$.  Since $i < j < k$, $n+1-k < n+1-j < n+1-k$ thus $\pi_{n+1-c}$, $\pi_{n+1-b}$ and $\pi_{n+1-a}$ form a $(123)$-pattern in $f(\pi)$.
The proofs of the other bijections are similar and are left to the reader.

\end{proof}

We now prove the following result:

\begin{Theorem}
We have
\begin{align*}
[123]_{ch} &= \{ 123 \}\\
[321]_{ch} &= \{ 321 \} \\
[312]_{ch} &= \{ 312, 132 \} = [132]_{ch}\\
[213]_{ch} &= \{213, 231 \} = [231]_{ch}.
\end{align*}
\end{Theorem}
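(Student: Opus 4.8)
The plan is to transfer everything to the major index via the bijection $f$. Writing $Maj_n(\sigma;q)=F_n^{maj}(\sigma;q)=\sum_{\pi\in Av_n(\sigma)}q^{maj(\pi)}$, I would first show that for each of the six patterns $\sigma\in S_3$ one has $Ch_n(\sigma;q)=Maj_n(g(\sigma);q)$ for all $n\ge 0$, where $g$ is the involution of $S_3$ that swaps $132$ and $213$ and fixes $123,231,312,321$. Granting this, the theorem is immediate from the known major-index Wilf classification of $S_3$ in \cite{DDJ}: the $ch$-classes are simply the $g$-preimages of the $maj$-classes.

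To establish the six identities, fix one of the bijections supplied by Lemma 2, say $f: Av_n(\sigma')\to Av_n(\sigma'')$. Since $f$ restricts to a bijection between these two avoidance sets, I can reindex the sum defining $Ch_n(\sigma'';q)$ by $\tau=f(\pi)$ with $\pi$ ranging over $Av_n(\sigma')$, and then apply $ch(f(\pi))=maj(\pi)$ from Lemma 1:
\[
Ch_n(\sigma'';q)=\sum_{\tau\in Av_n(\sigma'')}q^{ch(\tau)}=\sum_{\pi\in Av_n(\sigma')}q^{ch(f(\pi))}=\sum_{\pi\in Av_n(\sigma')}q^{maj(\pi)}=Maj_n(\sigma';q).
\]
Running this through the six lines of Lemma 2 yields $Ch_n(123)=Maj_n(123)$, $Ch_n(321)=Maj_n(321)$, $Ch_n(231)=Maj_n(231)$, $Ch_n(312)=Maj_n(312)$, $Ch_n(132)=Maj_n(213)$ and $Ch_n(213)=Maj_n(132)$, which is exactly $Ch_n(\sigma;q)=Maj_n(g(\sigma);q)$ for each $\sigma\in S_3$.

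For the conclusion I would invoke from \cite{DDJ} that the major-index Wilf classes among the patterns in $S_3$ are $\{123\}$, $\{321\}$, $\{132,231\}$ and $\{213,312\}$. Because $Ch_n(\sigma;q)=Ch_n(\tau;q)$ for all $n$ if and only if $Maj_n(g\sigma;q)=Maj_n(g\tau;q)$ for all $n$, the $ch$-classes are the images of the $maj$-classes under $g^{-1}=g$, namely $g(\{123\})=\{123\}$, $g(\{321\})=\{321\}$, $g(\{132,231\})=\{213,231\}$ and $g(\{213,312\})=\{132,312\}$ --- precisely the four classes in the statement. If one wishes to avoid quoting the full $S_3$ classification, it is enough to take $132\stackrel{maj}{\equiv}231$ and $213\stackrel{maj}{\equiv}312$ from \cite{DDJ} for the inclusions $\supseteq$, together with the computation at $n=3$, where $Maj_3$ takes the four pairwise-distinct values $2q+2q^2+q^3$, $1+2q+2q^2$, $1+2q+q^2+q^3$ and $1+q+2q^2+q^3$, to rule out any further coalescing.

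The substantive work has already been done in Lemmas 1 and 2, so the only things that really need care are bookkeeping: making sure that each $f$ in Lemma 2 is used in the correct direction so that $g$, rather than $g^{-1}$, appears in $Ch_n(\sigma;q)=Maj_n(g(\sigma);q)$ (here $g$ is an involution, so the point is moot, but it should be checked), and confirming that the major-index facts cited are exactly those needed. Were one to insist on a self-contained argument, the genuine obstacle would be re-proving $132\stackrel{maj}{\equiv}231$ and $213\stackrel{maj}{\equiv}312$, each of which requires a $maj$-preserving bijection between the corresponding avoidance classes --- content that the present set-up is designed to import from \cite{DDJ} rather than reprove.
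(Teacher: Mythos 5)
Your proposal is correct and follows essentially the same route as the paper: it transfers the $maj$-Wilf classification of $S_3$ from \cite{DDJ} to the charge statistic via the bijection $f$ of Lemmas 1 and 2, tracking which avoidance class maps to which. Your version is in fact a bit more careful than the paper's, since you make the reindexing of the generating-function sums explicit and note that the full $maj$-classification (not just the equivalences) is needed to rule out further coalescing of classes.
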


\begin{proof}

Dokos, Dwyer, Johnson, Sagan and Selsor \cite{DDJ} proved 

\begin{align*}
[123]_{maj} &= \{ 123 \}\\
[321]_{maj} &= \{ 321 \} \\
[132]_{maj} &= \{ 132, 231 \} = [231]_{maj}\\
[213]_{maj} &= \{213, 312 \} = [312]_{maj}.
\end{align*}

Recall that 
\[
[123]_{maj} = \{ \pi \in S_3 \vert F_n^{maj}(123; q) = F_n^{maj} (\pi; q) \} \]
or
\[ [123]_{maj} = \{ \pi \in S_3 \vert \sum_{\sigma \in Av_n(123)} q^{maj(\sigma)} = \sum_{\sigma \in Av_n(\pi)} q^{maj(\sigma)} \}.
\]

Since the function $f$ defined above takes $Av_n(123)$ to $Av_n(123)$ and takes the major index to the charge statistic, we can apply the function $f$ to the equation above to obtian $[123]_{ch} = \{ 123 \}$.  Similarly, by Lemma 2 we have that $f$ takes $Av_n(321)$ to $Av_n(321)$, thus $[321]_{ch} = \{ 321 \}$.  Since $f$ takes $Av_n(213)$ to $Av_n(132)$ and $Av_n(312)$ to $Av_n(312)$ we have $[132]_{ch} = [312]_{ch} = \{ 312, 132 \}$.  Finally, since $f$ takes $Av_n(231)$ to $Av_n(231)$ and $Av_n(132)$ to $Av_n(213)$ we have $[213]_{ch} = [231]_{ch} = \{ 213, 231 \}$.  

\end{proof}

Utilizing the same function $f$ and results of \cite{DDJ}, we can obtain the following results for larger subsets of $S_3$.

\begin{Theorem}
For $\Pi \in S_3$ with $\vert \Pi \vert = 2$ and $\Pi \neq \{123, 321\}$, we have
\[
[132, 213]_{ch} = \{ \{132, 213\}, \{213, 312\}, \{132, 231\}, \{231, 312\} \}.
\]

All other $ch$-Wilf equivalence classes under the given conditions contain a single pair.
\end{Theorem}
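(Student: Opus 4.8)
The plan is to repeat, now for pairs, the argument used above for single patterns. The first step is to upgrade Lemma 2 to an identity of sets and to check that $f$ respects intersections. Since the six sets $Av_n(\rho)$, $\rho\in S_3$, all have the same finite cardinality, the injection in each line of Lemma 2 is in fact a bijection $f:\,Av_n(\rho)\to Av_n(\phi(\rho))$, where $\phi$ is the involution of $S_3$ read off from Lemma 2, namely $\phi(123)=123$, $\phi(132)=213$, $\phi(213)=132$, $\phi(231)=231$, $\phi(312)=312$, $\phi(321)=321$. Because $f$ is a bijection of all of $S_n$ it also satisfies $f(A\cap B)=f(A)\cap f(B)$, so for every pair $\{\sigma,\tau\}\subseteq S_3$ the map
\[
f:\; Av_n(\{\sigma,\tau\}) = Av_n(\sigma)\cap Av_n(\tau)\;\longrightarrow\; Av_n(\phi(\sigma))\cap Av_n(\phi(\tau)) = Av_n(\{\phi(\sigma),\phi(\tau)\})
\]
is a bijection. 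Combining this with Lemma 1 ($maj = ch\circ f$) and reindexing the defining sum by $\tau = f(\sigma)$ yields
\[
F_n^{maj}(\Pi;q) = F_n^{ch}(\phi(\Pi);q)\qquad\text{for all pairs }\Pi\subseteq S_3\text{ and all }n\ge 0 ,
\]
where $\phi(\Pi)=\{\phi(\sigma):\sigma\in\Pi\}$.

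Since $\phi$ is an involution on the set of all pairs in $S_3$, this identity makes $\phi$ a bijection from $maj$-Wilf equivalence classes of pairs onto $ch$-Wilf equivalence classes of pairs; explicitly, $[\phi(\Pi)]_{ch}=\phi\big([\Pi]_{maj}\big)$ for every pair $\Pi$. So the theorem reduces to quoting the $maj$-Wilf classification of pairs in $S_3$ from \cite{DDJ} and transporting each class through $\phi$. Among the fourteen pairs with $\Pi\neq\{123,321\}$, the results of \cite{DDJ} give exactly one nontrivial $maj$-class, $\{\{132,213\},\{132,312\},\{213,231\},\{231,312\}\}$, the other ten pairs each being alone in their class. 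Applying $\phi$ elementwise sends this four-element class to $\{\{132,213\},\{213,312\},\{132,231\},\{231,312\}\}$, which is the asserted $[132,213]_{ch}$, and sends each of the ten singleton $maj$-classes to a singleton $ch$-class; this is precisely the statement of the theorem. (The pair $\{123,321\}$ is set aside because $Av_n(\{123,321\})=\emptyset$ for $n\ge 5$, making its generating function degenerate; this is consistent with the picture, since $\phi$ fixes that pair.)

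Almost all of this is bookkeeping: verifying $f(A\cap B)=f(A)\cap f(B)$, reading $\phi$ off Lemma 2, and computing the four images under $\phi$ in the nontrivial class. The single substantive ingredient is imported rather than proved here, namely the $maj$-Wilf classification of pairs in $S_3$ from \cite{DDJ}. If one wanted a self-contained argument, the main obstacle would be reproving that classification: separating the ten singleton classes by evaluating the polynomials $F_n^{maj}(\Pi;q)$ for small $n$, and merging the four pairs of the nontrivial class by exhibiting explicit $maj$-preserving bijections built from the reverse, complement, and inverse operations of Section 2.
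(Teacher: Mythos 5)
Your proposal is correct and follows essentially the same route as the paper: quote the $maj$-Wilf classification of pairs from \cite{DDJ} and transport it through the bijection $f$ of Lemmas 1 and 2, using the induced involution $132\leftrightarrow 213$ on patterns. You simply make explicit the bookkeeping (that $f$ respects intersections and that the lines of Lemma 2 are genuine bijections) that the paper leaves implicit.
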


\begin{proof}
Dokos, Dwyer, Johnson, Sagan and Selsor prove the following theorem (Theorem 5.1) in \cite{DDJ}: 
\begin{Theorem}
We have
\[
[132, 213]_{maj} = \{ \{132, 213\}, \{132, 312\}, \{213, 231\}, \{231, 312\} \}.
\]
All other $maj$-Wilf equivalence classes for $\Pi \in S_3$ with $\vert \Pi \vert = 2$ and $\Pi \neq \{123, 321\}$ contain a single pair.
\end{Theorem}

Applying our function $f$ to this result gives our theorem for the charge statistic.
\end{proof}

Dokos, Dwyer, Johnson, Sagan and Selsor go on to classify the $maj$-Wilf equivalence classes for all subsets of $S_3$ and one can translate these results into equivalent statements for the charge statistic utilizing the function $f$ if desired.

\section{A Conjecture of Dokos, Dwyer, Johnson, Sagan and Selsor}

In their paper defining $st$-Wilf equivalence, Dokos, Dwyer, Johnson, Sagan and Selsor \cite{DDJ} state the following conjecture (Conjecture 3.6):

\begin{Conjecture}
For all $k \geq 0$ we have
\[
<q^i> M_{2^k-1}(321;q) = \begin{cases}
1 & {\text {if }} i = 0,\cr
{\text {an even number}} & {\text {if }} i \geq 1.
\end{cases}
\]
\end{Conjecture}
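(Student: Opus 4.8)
The plan is to transfer the conjecture, which concerns the major index, to the charge statistic via the bijection $f$ of Section~3, and then to evaluate the resulting generating function through the Robinson--Schensted correspondence. By Lemmas~1 and~2, $f$ restricts to a bijection of $Av_n(321)$ onto itself carrying $maj$ to $ch$, so
\[
M_n(321;q)=\sum_{\sigma\in Av_n(321)}q^{maj(\sigma)}=\sum_{\sigma\in Av_n(321)}q^{ch(\sigma)}=Ch_n(321;q);
\]
thus the stated conjecture is equivalent to the same assertion for $Ch_n(321;q)$, and it suffices to compute this common polynomial when $n=2^k-1$. By Schensted's theorem a permutation avoids $321$ exactly when its longest decreasing subsequence has length at most $2$, so RSK is a bijection between $Av_n(321)$ and the pairs $(P,Q)$ of standard Young tableaux of a common shape $\lambda\vdash n$ with at most two rows. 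Combining this with the classical fact that the descent set of $\sigma$ equals the descent set of its recording tableau $Q$, and writing $\lambda=(n-j,j)$ for $0\le j\le\lfloor n/2\rfloor$, I would group the pairs by shape and sum freely over $P$ to obtain
\[
M_n(321;q)=\sum_{j=0}^{\lfloor n/2\rfloor} f^{(n-j,j)}\,g_{(n-j,j)}(q),
\]
where $f^{\lambda}$ is the number of standard Young tableaux of shape $\lambda$ and $g_{\lambda}(q)=\sum_{Q\in\mathrm{SYT}(\lambda)}q^{maj(Q)}$.

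Next I would evaluate the two factors for two-row shapes. The hook length formula gives $f^{(n-j,j)}=\binom{n}{j}-\binom{n}{j-1}$ (reading $\binom{n}{-1}$ as $0$), and the $q$-analogue of the hook length formula, $g_{\lambda}(q)=q^{b(\lambda)}[n]_q!/\prod_{c\in\lambda}[h(c)]_q$ with $b(\lambda)=\sum_i(i-1)\lambda_i$, specialises for $\lambda=(n-j,j)$ — using $b\bigl((n-j,j)\bigr)=j$ and the identity $[n-j+1]_q-[j]_q=q^j[n-2j+1]_q$ — to $g_{(n-j,j)}(q)=\binom{n}{j}_q-\binom{n}{j-1}_q$ (again reading $\binom{n}{-1}_q$ as $0$). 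Hence
\[
M_n(321;q)=\sum_{j\ge0}\Bigl(\binom{n}{j}-\binom{n}{j-1}\Bigr)\Bigl(\binom{n}{j}_q-\binom{n}{j-1}_q\Bigr).
\]
I expect this last identification of $g_{(n-j,j)}(q)$ to be the only spot that needs real care: one must get the exponent shift $b\bigl((n-j,j)\bigr)$ and the hook lengths of the shape $(n-j,j)$ exactly right before recognising the quotient as a difference of two $q$-binomial coefficients.

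Finally I would run the parity argument, which is precisely where the hypothesis $n=2^k-1$ enters. Since $2^k-1$ has binary expansion consisting entirely of $1$'s, Lucas' theorem shows $\binom{2^k-1}{j}$ is odd for every $0\le j\le 2^k-1$; consequently $f^{(n-j,j)}=\binom{n}{j}-\binom{n}{j-1}$ is even for every $j\ge1$, while for $j=0$ we have $f^{(n)}=1$ and $g_{(n)}(q)=1$. Reducing the displayed sum modulo $2$, every term with $j\ge1$ is a polynomial all of whose coefficients are even and the $j=0$ term contributes exactly $1$, so $M_n(321;q)\equiv1\pmod{2}$: the coefficient of $q^0$ is odd and the coefficient of $q^i$ is even for all $i\ge1$. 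To pin down the constant term exactly, observe that $maj(\sigma)=0$ forces $\sigma$ to have empty descent set, hence $\sigma$ is the identity, which avoids $321$; so the coefficient of $q^0$ equals $1$. This establishes the stated conjecture, and, through the equality $M_n(321;q)=Ch_n(321;q)$, the analogous statement for the charge statistic as well. The main difficulty is bookkeeping rather than conceptual: one must assemble the RSK dictionary ($321$-avoidance versus two-row shapes, $Des(\sigma)=Des(Q)$) together with the correctly normalised $q$-hook-length formula, after which the powers-of-two behaviour follows immediately from Lucas' theorem.
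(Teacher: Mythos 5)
Your proof is correct, but it takes a genuinely different route from the paper's. The paper never computes $M_n(321;q)$ or $Ch_n(321;q)$ in closed form: it constructs an explicit fixed-point-free involution $\phi_{2^k-1}$ on the set of two-row standard Young tableaux of size $2^k-1$ to conclude that their number is even, uses the fact that charge is constant on the set of permutations sharing a given insertion tableau to pair up the RSK pairs so that every coefficient of $q^i$ with $i\ge 1$ in $Ch_{2^k-1}(321;q)$ is even, and only at the very end applies $f$ to deduce the major-index statement. You invoke the transfer $M_n(321;q)=Ch_n(321;q)$ in the opposite direction and then in fact never need it, since you work with $maj$ directly: the identity $Des(\sigma)=Des(Q(\sigma))$ plus the $q$-hook-length formula gives the closed form $\sum_{j\ge 0}\bigl(\binom{n}{j}-\binom{n}{j-1}\bigr)\bigl(\binom{n}{j}_q-\binom{n}{j-1}_q\bigr)$, and Lucas' theorem at $n=2^k-1$ makes every $j\ge 1$ summand even. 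Your identification of $g_{(n-j,j)}(q)$ is the delicate step and you have it right: $b\bigl((n-j,j)\bigr)=j$, the hook products telescope to $[n-j+1]_q!\,[j]_q!/[n-2j+1]_q$, and $[n-j+1]_q-[j]_q=q^j[n-2j+1]_q$ yields the difference of $q$-binomials; the constant-term argument is also sound. What your approach buys is an explicit formula and a transparent explanation of why the hypothesis $n=2^k-1$ is exactly what is needed (all $\binom{2^k-1}{j}$ are odd); what the paper's approach buys is a purely bijective, formula-free argument. A minor stylistic point: the detour through the charge statistic is dispensable in your write-up, since your computation establishes the $maj$ statement directly.
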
 

We will prove the analogous statement for the charge statistic in this section.

Before proving this theorem, we need some preliminary results and definitions.  It is a well-known result of the Robinson-Schensted correspondence that if two permutations have the same $P$ tableau then the charge statistic on those two permutations is the same.  In addition, one can easily show that a permutation is $321$-avoiding if and only if the $P$ tableau contains at most 2 rows.  There is only one tableau with 1 row, namely $P = \begin{array}{ccccc} 1&2&3&\cdots&n\end{array}$ and this tableau corresponds with the permutation $\pi = \begin{array}{ccccc}1&2&3&\cdots&n\end{array}$ which has a charge value of zero.  

We will now inductively define a bijection $\phi_{2^k-1}$ from the set of 2-row tableau of size $2^k-1$ to the set of 2-row tableau of size $2^k-1$.  If $k$ = 1, there are no 2-row tableau of size $1$ so we begin with $k=2$.  There are two tableau of size $2^2-1=3$ and we define $\phi_3$ as:

\begin{align*}
\phi_3\left(\begin{array}{cc}1&2\\3& \end{array}\right) &= \begin{array}{cc}1&3\\2& \end{array}\\
\phi_3\left(\begin{array}{cc}1&3\\2& \end{array}\right) &= \begin{array}{cc}1&2\\3& \end{array}\end{align*}

Now suppose that $\phi_{2^{k-1}-1}$ is a bijection from the set of 2-row tableau of size $2^{k-1}-1$ to the set of 2-row tableau of size $2^{k-1}-1$.  Define $\phi_{2^k-1}$ as follows:

Let $T$ be a 2-row tableau of size $2^k-1$ and let $S$ be the portion of $T$ containing the numbers $1$, $2$, $\dots$, $2^{k-1}-1$.  

\vspace{.2in}
\underline{\bf{Case 1:}}  If $S$ is a 2-row tableau then define $\phi_{2^k-1}(T)$ as the tableau $T$ with $S$ replaced by $\phi_{2^{k-1}-1}(S)$.  
Since $\phi_{2^{k-1}-1}$ is a bijection then $\phi_{2^k-1}$ is a bijection on this set of tableau.

For example, 
\begin{Example}  Let $k=4$ and 
\[
T = \begin{array}{ccccccccc}1&2&4&5&7&8&9&13&15\\3&6&10&11&12&14& & & \end{array}
\]
Then
\[
S=\begin{array}{ccccc}1&2&4&5&7\\3&6& & & \end{array},
\]
and if
\[
\phi(S) = \begin{array}{ccccc} 1&3&4&5&7\\2&6& & & \end{array}
\]
then
\[
\phi_{2^4-1}(T) = \begin{array}{ccccccccc} 1&3&4&5&7&8&9&13&15\\2&6&10&11&12&14& & \end{array}.
\]
\end{Example}

\underline{\bf{Case 2:}}  If $S$ is a 1-row tableau, then let $l$ be the smallest number such that $2k$, $2k+1$, $\dots$, $2k+l$ are in one row of $T$ and $2k+l+1$, $2k+l+2$, $\dots$, $2k+2l$ are in the other row with $k \geq 0$ as small as possible.

Form $\phi_{2^k-1}(T)$ by swapping the positions of $2k$, $2k+1$, $\dots$, $2k+l$ and $2k+l+1$, $2k+l+2$, $\dots$, $2k+2l$ in $T$.  Clearly this is a bijection on this set of tableaux.  For example,
\begin{Example}
Let $k=4$ and let 
\[
T=\begin{array}{ccccccccccc} 1&2&3&4&5&6&7&8&9&11&14\\10&12&13&15& & & & & & & \end{array}.
\]
Then 
\[
\phi_{2^4-1}(T) = \begin{array}{ccccccccccc}1&2&3&4&5&6&7&8&9&10&14\\11&12&13&15& & & & & & & \end{array}.
\]

\end{Example}

Now we can move on to some necessary preliminary results.
\begin{Lemma}
For all $k \geq 1$, there are an odd number of 321-avoiding permutations in $S_{2^k-1}$.
\end{Lemma}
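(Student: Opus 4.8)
The plan is to combine the Robinson--Schensted correspondence with the involution $\phi_{2^k-1}$ just constructed. By RSK, the $321$-avoiding permutations in $S_{2^k-1}$ are in bijection with the pairs $(P,Q)$ of standard Young tableaux of a common shape with at most two rows and $2^k-1$ cells (using the fact, noted above, that $\pi$ is $321$-avoiding exactly when its $P$-tableau has at most two rows). I would separate this set according to whether the common shape has one or two rows. There is a single one-row shape, $(2^k-1)$, and it contributes exactly one pair, namely $P = Q = 1\,2\,\cdots\,(2^k-1)$; so it is enough to prove that the number of pairs whose common shape has two rows is even.

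For that, I would reduce the count modulo $2$ to a count of tableaux. Writing $f^\lambda$ for the number of standard Young tableaux of shape $\lambda$, the number of two-row pairs is $\sum_\lambda (f^\lambda)^2$, the sum over two-row shapes $\lambda \vdash 2^k-1$; since $(f^\lambda)^2 \equiv f^\lambda \pmod 2$, this is congruent modulo $2$ to $\sum_\lambda f^\lambda$, the number of two-row standard Young tableaux of size $2^k-1$. (Equivalently, one can pair $(P,Q)$ with $(Q,P)$ and observe that the fixed points are the pairs $(P,P)$.) So the lemma reduces to showing that there is an even number of two-row tableaux of size $2^k-1$.

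This last statement is exactly where $\phi_{2^k-1}$ enters: I would argue that $\phi_{2^k-1}$ is a fixed-point-free involution of the set of two-row tableaux of size $2^k-1$, and a fixed-point-free involution forces a finite set to have even cardinality. The base case $\phi_3$ is visibly an involution interchanging the two tableaux of size $3$ without fixed points. For the inductive step, the two cases in the definition are exhaustive (the entries $1,\dots,2^{k-1}-1$ of $T$ always form the sub-tableau $S$, which has one or two rows), and each case is a fixed-point-free involution that preserves its own case: in Case 1 this follows because $\phi_{2^{k-1}-1}$ is such an involution by induction and replacing $S$ by $\phi_{2^{k-1}-1}(S)$ genuinely alters $T$; in Case 2 because swapping two disjoint blocks of consecutive values of equal length is always a fixed-point-free involution, and it leaves $S$ unchanged. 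Hence $|Av_{2^k-1}(321)| = 1 + (\text{an even number})$ is odd.

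The main obstacle I expect is the bookkeeping in the inductive step for $\phi_{2^k-1}$ in Case 2: one must check that the balanced pair of consecutive blocks always exists, that all the entries being swapped lie above $2^{k-1}-1$ (so that Case 2 tableaux are sent to Case 2 tableaux), and that Case 1 tableaux are sent to Case 1 tableaux, so that the global map is a well-defined involution. Once that is in hand the parity count is immediate; an alternative, if one prefers to avoid the case analysis, is to use the closed form $\binom{n}{\lfloor n/2\rfloor}-1$ for the number of two-row tableaux of size $n$ and apply Lucas' theorem at $n = 2^k-1$, but the involutive argument is self-contained given the setup already in place.
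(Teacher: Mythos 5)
Your proof is correct and follows essentially the same route as the paper: the Robinson--Schensted correspondence splits $Av_{2^k-1}(321)$ into the single one-row pair plus the two-row pairs, and the fixed-point-free involution $\phi_{2^k-1}$ shows the number $m$ of two-row tableaux is even. Your handling of the pair count is in fact more careful than the paper's, which asserts that the number of two-row pairs is $m^2$ even though RSK pairs must share a shape (the correct count is $\sum_\lambda (f^\lambda)^2$); your reduction $\sum_\lambda (f^\lambda)^2 \equiv \sum_\lambda f^\lambda \pmod 2$ repairs this slip without changing the conclusion.
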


\begin{proof}
Every $321$-avoiding permutation corresponds to a pair of tableau with 1 or 2 rows.  There is only one pair of 1-row tableau, namely $(P, P)$ where $P = \begin{array}{ccccc} 1&2&3&\cdots&2^k-1 \end{array}$.  The number of pairs of 2-row tableau is $m^2$ where $m$ is the number of 2-row tableau of size $2^k-1$.  Since $\phi_{2^k-1}$ is a bijection on the set of 2-row tableau of size $2^k-1$ with $\phi_{2^k-1}(T) \neq T$ for any $T$ (by definition) and $\phi_{2^k-1}(\phi_{2^k-1}(T)) = T$, then $m$ is even and thus $m^2$ is even.
\end{proof}

\begin{Corollary} There are an even number of 2-row tableau of size $2^k-1$.
\end{Corollary}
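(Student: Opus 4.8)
The plan is to read this off directly from the map $\phi_{2^k-1}$ already constructed above; in fact the needed statement was extracted inside the proof of Lemma~3, so the corollary is essentially a repackaging. Write $m$ for the number of $2$-row standard Young tableaux of size $2^k-1$. For $k=1$ there are no such tableaux, so $m=0$ is even; assume $k\geq 2$. I would state explicitly that $\phi_{2^k-1}$ is a fixed-point-free involution on this set of tableaux, and then invoke the elementary fact that any finite set admitting a fixed-point-free involution has even cardinality, since the involution partitions it into orbits of size exactly $2$.

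The work, such as it is, lies in confirming the two properties of $\phi_{2^k-1}$. That it is an involution I would prove by induction on $k$: the base map $\phi_3$ visibly squares to the identity; in Case~1 we have $\phi_{2^k-1}\circ\phi_{2^k-1}=\mathrm{id}$ because $\phi_{2^{k-1}-1}$ does, noting that replacing the sub-tableau $S$ by $\phi_{2^{k-1}-1}(S)$ keeps it a $2$-row tableau so the second application is again governed by Case~1; and in Case~2, interchanging the two blocks twice restores $T$, using that the block-selection rule depends only on the unordered pair of row-contents and so picks out the same blocks for $\phi_{2^k-1}(T)$ as for $T$. That $\phi_{2^k-1}$ has no fixed point is immediate in each case: $\phi_3$ moves both tableaux; in Case~1, $\phi_{2^{k-1}-1}(S)\neq S$ by the inductive hypothesis; and in Case~2, the two blocks being swapped are nonempty and lie in different rows, so the tableau genuinely changes.

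The main (and only) obstacle is thus the bookkeeping needed to check that $\phi_{2^k-1}$ is a \emph{well-defined} fixed-point-free involution — in particular that the Case~2 selection rule is symmetric under swapping the two blocks, and that Cases~1 and~2 are each closed under $\phi_{2^k-1}$ — but all of this is built into the recursive definition, so no genuinely new argument is required beyond what already appears in the construction and in the proof of Lemma~3.
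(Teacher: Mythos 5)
Your proof is correct and follows exactly the paper's route: the paper gives no separate argument for this Corollary, having already established inside the proof of the preceding Lemma that $\phi_{2^k-1}$ is a fixed-point-free involution on the $2$-row tableaux of size $2^k-1$, whence $m$ is even. Your additional checks that $\phi_{2^k-1}$ is well-defined, involutive, and fixed-point-free (plus the trivial $k=1$ case) are just a more careful spelling-out of what the paper asserts by construction.
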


\begin{Theorem}
For all $k \geq 0$ we have
\[
<q^i> Ch_{2^k-1}(321;q) = \begin{cases}
1 & {\text {if }} i = 0,\cr
{\text {an even number}} & {\text {if }} i \geq 1.
\end{cases}
\]
\end{Theorem}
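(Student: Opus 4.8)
The plan is to pass to the Robinson--Schensted correspondence, decompose $Ch_{2^k-1}(321;q)$ by tableau shape, and then apply the involution $\phi_{2^k-1}$ one shape at a time. Write $n=2^k-1$. Since RSK is a bijection from $Av_n(321)$ onto the set of pairs of standard Young tableaux of a common shape $\lambda\vdash n$ with $\ell(\lambda)\le 2$, and since the charge is constant on each Knuth ($P$-) class, $ch(\sigma)$ depends only on $P(\sigma)$; write $ch(P)$ for this common value. Summing first over the $Q$-tableau of a fixed shape gives
\[
Ch_n(321;q)=\sum_{\substack{\lambda\vdash n\\ \ell(\lambda)\le 2}} f^\lambda\, C_\lambda(q),\qquad C_\lambda(q):=\sum_{P\in\mathrm{SYT}(\lambda)} q^{ch(P)} .
\]
The unique one-row shape contributes $f^{(n)}C_{(n)}(q)=1\cdot q^{0}$, since the one-row tableau is the $P$-tableau of the identity and $ch(\mathrm{id})=0$. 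For a two-row shape $\lambda$ one has $<q^{0}>C_\lambda(q)=0$: if $ch(P)=0$ then any permutation $\sigma$ with $P(\sigma)=P$ satisfies $ch(\sigma)=0$, and since charge values are nonnegative this forces $\sigma$ to be the identity, whose $P$-tableau has a single row --- a contradiction. Thus $<q^{0}>Ch_n(321;q)=1$, and it remains only to show that $<q^{i}>Ch_n(321;q)$ is even for each $i\ge 1$.

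For this it suffices to prove that $f^\lambda$ is even for every two-row partition $\lambda\vdash 2^k-1$, because then each coefficient $<q^{i}>\sum_{\ell(\lambda)=2}f^\lambda C_\lambda(q)$ is an integer linear combination of even numbers. I would get this from the bijection $\phi_{2^k-1}$ already in hand: by construction it is a fixed-point-free involution on the set of two-row tableaux of size $2^k-1$ (this is precisely the property used in Lemma 3), and the extra ingredient I need is that $\phi_{2^k-1}$ preserves the shape $\lambda$. Granting shape-preservation, $\phi_{2^k-1}$ restricts to a fixed-point-free involution on $\mathrm{SYT}(\lambda)$ for each two-row $\lambda$, so $f^\lambda=|\mathrm{SYT}(\lambda)|$ is even, and the theorem follows (the cases $n\le 1$ being trivial, as then $Ch_n(321;q)=1$).

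The one genuine thing to check is therefore that $\phi_{2^k-1}$ is shape-preserving, which I would verify by induction on $k$, running through the two cases of the definition. In Case~1 the entries $2^{k-1},\dots,2^k-1$ occupy the same cells before and after, and the sub-tableau $S$ is replaced by $\phi_{2^{k-1}-1}(S)$, which has the same shape as $S$ by induction; so the shape of $T$ is unchanged. In Case~2 one merely interchanges the cells occupied by two runs of consecutive integers of equal length lying in opposite rows, so no cell is added or removed and the shape is again preserved; the small remaining check is that the output really is a standard tableau, which follows from the minimality used in selecting the runs. I expect this shape-preservation statement --- in particular making Case~2 airtight --- to be the only real obstacle; the rest is the bookkeeping of the RSK decomposition above.
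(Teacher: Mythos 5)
Your proposal is correct, and at bottom it uses the same machinery as the paper---RSK restricted to $Av_{2^k-1}(321)$, constancy of charge on $P$-classes, and the fixed-point-free involution $\phi_{2^k-1}$---but your shape-by-shape decomposition is a genuine and, in fact, necessary refinement. The paper's proof only establishes that the \emph{total} number $m$ of two-row standard tableaux of size $2^k-1$ is even, and then asserts that the RSK pairs can be partitioned into two-element blocks of equal charge; but an RSK pair $(P,Q)$ must have $P$ and $Q$ of a common shape, so for a fixed $P$-tableau of shape $\lambda$ the number of admissible $Q$-tableaux is $f^\lambda$, not $m$, and the pairing goes through only if each individual $f^\lambda$ is even. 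You isolate exactly this point: writing the coefficient of $q^i$ (for $i\ge 1$) as $\sum_\lambda f^\lambda\cdot\#\{P\in\mathrm{SYT}(\lambda): ch(P)=i\}$ over two-row shapes $\lambda$, you reduce the theorem to the evenness of every such $f^\lambda$, and you obtain that by observing that $\phi_{2^k-1}$ preserves shape (Case 1 by induction, Case 2 because two equal-length runs of consecutive entries in opposite rows are interchanged), hence restricts to a fixed-point-free involution on each $\mathrm{SYT}(\lambda)$. That shape-preservation step is only implicit in the paper (whose Lemma 3 even counts the RSK pairs as $m^2$ rather than $\sum_\lambda (f^\lambda)^2$, a statement that is correct only after the same observation), so your write-up is more complete than the published argument. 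Two small remarks: your treatment of the constant term (charge zero forces the identity, whose $P$-tableau is one-rowed) is sound and matches the paper's; and the paper's Case 2 is stated with garbled indices, but your reading of it---swap the first pair of equal-length consecutive runs lying in opposite rows, as in the paper's Example 2---is the intended one and does preserve shape and standardness.
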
 

\begin{proof}  
Since any inversion in a permutation $\pi$ introduces a charge value, the only permutation in $S_{2^k-1}$ with a charge value of zero is $\pi = \begin{array}{ccccc}1&2&3&\cdots&n \end{array}$, which corresponds to the only 1-row tableau of size $n$.  Thus all other permutations in $Av(321)$ correspond to pairs of 2-row tableau under the Robinson-Schensted correspondence.  By Corollary, there are $m$ 2-row tableau of size $2^k-1$ and $m$ is even.  

Choose an ordered pair of distinct 2-row tableaux of size $2^k-1$, say $A$ and $B$.  This can be done in $m(m-1)$ ways so there are an even number of ordered pairs $(A,B)$ where $A \neq B$.  Then since any two permutations which give rise to the same $P$ tableau under the Robinson-Schensted correspondence have the same charge, the permutations corresponding to $(A,A)$ and $(A,B)$ have the same charge.  

Thus the set of pairs of 2-row tableau of size $2^k-1$ can be partitioned into pairs of pairs that give rise to a pair of permutations with the same charge.  This gives our result.
\end{proof}

We now obtain the conjecture of Dokos, Dwyer, Johnson, Sagan and Selsor as a Corollary.

\begin{Corollary}
For all $k \geq 0$ we have
\[
<q^i> M_{2^k-1}(321;q) = \begin{cases}
1 & {\text {if }} i = 0,\cr
{\text {an even number}} & {\text {if }} i \geq 1.
\end{cases}
\]
\end{Corollary}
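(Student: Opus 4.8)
The plan is to show that the polynomial $M_n(321;q)$ coincides with $Ch_n(321;q)$ for every $n$, and then simply invoke the Theorem at $n = 2^k-1$. The bridge between the two is already built: Lemma 1 gives $maj(\pi) = ch(f(\pi))$ for the bijection $f(\pi) = ((\pi^r)^c)^{-1}$, and Lemma 2 guarantees that $f$ restricts to a bijection $Av_n(321) \to Av_n(321)$. These are exactly the two ingredients needed.

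Concretely, first I would write
\[
M_n(321;q) = \sum_{\sigma \in Av_n(321)} q^{maj(\sigma)} = \sum_{\sigma \in Av_n(321)} q^{ch(f(\sigma))},
\]
where the second equality is Lemma 1 applied termwise. Next, since Lemma 2 tells us that $f$ maps $Av_n(321)$ bijectively onto $Av_n(321)$, the substitution $\tau = f(\sigma)$ re-indexes the sum without changing it, yielding
\[
\sum_{\sigma \in Av_n(321)} q^{ch(f(\sigma))} = \sum_{\tau \in Av_n(321)} q^{ch(\tau)} = Ch_n(321;q).
\]
Hence $M_n(321;q) = Ch_n(321;q)$ for all $n \geq 0$; in particular this holds for $n = 2^k-1$. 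Comparing coefficients of $q^i$ on both sides and applying the Theorem then gives $\langle q^i \rangle M_{2^k-1}(321;q) = \langle q^i \rangle Ch_{2^k-1}(321;q)$, which is $1$ when $i=0$ and an even number when $i \geq 1$, as claimed.

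There is essentially no genuine obstacle here: the corollary is a formal consequence of Lemma 1, Lemma 2, and the Theorem. The only point that warrants care is making sure the re-indexing is valid, i.e.\ that one uses the surjectivity (not merely injectivity) of $f$ on $Av_n(321)$ so that $f(\sigma)$ really does range over all of $Av_n(321)$ as $\sigma$ does; this is precisely the content of the relevant line of Lemma 2. Everything else is bookkeeping.
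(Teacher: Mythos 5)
Your proposal is correct and follows essentially the same route as the paper: the paper's own proof also invokes the bijection $f$ from $Av_n(321)$ to $Av_n(321)$ carrying the major index to the charge statistic and then cites the charge-statistic theorem. Your write-up simply makes explicit the termwise application of Lemma 1 and the re-indexing justified by Lemma 2, which the paper leaves implicit.
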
 

\begin{proof}
The function $f$ defined in Section 3 is a bijection from $Av(321)$ to $Av(321)$ that takes the major index to the charge statistic, thus applying $f$ to this set gives the result.
\end{proof}

\end{document}